\newtheorem{thm}{Theorem}[section]
\newtheorem{cor}[thm]{Corollary}
\begin{document}

\title{A note on 2-dimensional Finsler manifold}
\author{Morteza Faghfouri \and Rahim Hosseinoghli}
\institute{M. Faghfouri \at
              Faculty of Mathematics, University of Tabriz, Tabriz, Iran.\\
              \email{faghfouri@tabrizu.ac.ir}           
           \and
           R. Hosseinoghli \at
Faculty of mathematics, University of Tabriz, Tabriz, Iran.\\
\email{r\_ hosseinoghli90@ms.tabrizu.ac.ir}}
\date{}

\maketitle
\begin{abstract}
We solve the following problem for $n=2:$ Is any n-dimensional Finsler manifold $(M, F)$ with a function $f$ which is nonconstant and smooth on $M$ satisfying
$
\dfrac{\partial g^{ij}}{\partial y^k}\dfrac{\partial f}{\partial x^i}=0,
$
a Riemannian  manifold?
The problem for $n>2$ remains open.
\subclass{53C60, 53C25.}
\keywords{Berwaldian metric, Finsler manifold, partial differential equation,doubly warped product metric.}
\end{abstract}

\section{Introduction}
The notion of doubly warped product manifolds has an important role in Riemannian geometry
and its applications. For example, Beem-Powell in \cite{Beem-Powell} studied this product for Lorentzian manifolds.
Then Allison in \cite{Allison:Pseudoconvexity} considered global hyperbolicity of doubly warped products and null pseudo convexity
of Lorentzian doubly warped products and recent years in \cite{unal:doubly.warped.products}, 
\cite{faghfouri2013doubly} and \cite{faghfouri:ondoublyimmersion} extended  some properties of warped product, submanifolds  and geometric inequality in warped product manifolds  for doubly warped product submanifolds into arbitrary Riemannian manifolds. In 2001, Kozma- Peter-Varga in \cite{kozma:WarpedproductofFinslermanifolds} defined their warped product
for Finsler metrics and concluded that completeness of a doubly warped product can be related to
completeness of its components.

In \cite[Theorem 3]{PeyghanTayebi:OndoublywarpedproductFinslerManifold} 
 E. Peyghan and A. Tayebi
  proved that: Let ${M_1}_{f_2}\times _{f_1} M_2$ be  a  $DWP $-Finsler manifold and $f_1$ is constant on $M_1$ ($f_2$ is constant on $M_2$). Then ${M_1}_{f_2}\times _{f_1} M_2$ is Berwaldian if and only if $M_1$ is Riemannain, $M_2$ is Berwaldian and
 \begin{align}
C^{ij}_k\dfrac{\partial f_1}{\partial x^i}=0.\tag{$*$}\label{$*$}
\end{align}
($M_2$ is Riemannian, $M_1$ is Berwaldian and $C^{ij}_k\dfrac{\partial f_2}{\partial x^i}=0$).\\
Now we can ask this question: Is there any nonconstant smooth function on Finsler manifold $ M $, which satisfies \eqref{$*$}?

In this paper by using partial differential equation properties,  we show that, if $ M $ is a 2-dimensional Finsler manifold and the equality \eqref{$*$}  holds for a nonconstant function $ f $, then
$ M $ is a Riemannian manifold.

\section{Preliminaries}
Let $M$ be a $n$-dimensional $C^\infty$ manifold. Denote by $T_xM$ the tangent space
at $x \in M$, by $ TM:=\bigcup _{x\in M}T_{x}M$ the tangent bundle of $ M$, and by $TM^0 =
TM - \lbrace 0\rbrace$ the slit tangent bundle on $ M$. A Finsler metric on $M $ is a function
$ F:TM\to  [0,\infty) $ which has the following properties:
\begin{enumerate}
\item[(i)] $F$ is $C^\infty$ on $ TM_0 $;
\item[(ii)] $F$ is positively 1-homogeneous on the fibers of tangent bundle $TM$;
\item[(iii)] for each $ y \in T_xM $ , the following quadratic form $g_y$ on $T_xM$ is positive
definite,
\end{enumerate}
where
$$g_{y}(u,v)=\dfrac{1}{2}\dfrac{\partial ^2}{\partial t\partial s}[F^{2}(y+su+tv)]\vert_{s,t=0}.$$
Let $(M, F)$ be a Finsler manifold. The second and third order derivatives
of $ \frac{1}{2}F^2_x :=\frac{1}{2}F^2(x,y)$
at $ y \in T_xM^0 $ are the symmetric  forms $ g_y $ and $ C_y $  on $ T_xM $ ,
which called the fundamental tensor and Cartan torsion, respectively. in other notation,
$$ C_{y}: T_{x}M\times T_{x}M\times T_{x}M \to \mathbb{R} $$
$$C_{y}(u,v,w):=\dfrac{1}{2}\dfrac{d}{dt}\left[ g_{y+tw}(u,v)\right] \vert_{t=0}, \qquad u,v,w\in T_{x}M $$
the family $ C:=\lbrace C_{y}\rbrace _{y \in TM_{0}} $
 is called the cartan torsion, it is well known that $ C=0$ if and only if $ F $ is Riemannian.
 let $ b_i $ be a local frame for $TM$, and $g_{ij}:=g_y(b_i,b_j) $, $ C_{ijk} := C_y(b_i,b_j,b_k)$.
 then $ g_{ij}(x,y) = \dfrac{1}{2}\dfrac{\partial ^{2}F^{2}(x,y)}{\partial y^{i}\partial y^{j}} $ and $
 C_{ijk}=\frac{1}{2}\frac{\partial g_{ij}}{\partial y^k}=\frac{1}{4}\frac{\partial^3 F^2}{\partial y^i\partial y^j\partial y^k} $.
For a Finsler manifold $(M, F)$, a global vector filed $ G $ is induced by $ F $ on $ TM^0 $, which in a standard coordinate $ (x^i,y^i) $
 for $ TM^0 $ is given by $  G=y^i\dfrac{\partial}{\partial x^i}-2C^i(x,y)\dfrac{\partial}{\partial y^i} $,
 where
$$G^i=\dfrac{1}{4}g^{il}\lbrace [F^2]_{x^ky^l}y^k-[F^2]_{x^l}\rbrace, \qquad y\in T_xM.$$
The $ G $ is called the spry associated to $ (M,F) $.
A Finsler metric $F$ is called a Berwald metric if $ G^i=\frac{1}{2} \Gamma^{i}_{jk}(x)y^jy^k $  is quadratic in
$y \in T_xM$ for any $x \in M$.
For a tangent vector $y \in T_xM^0$, define $B_{y}:T_{x}M\times T_{x}M\times T_{x}M \to T_{x}M $, $E_{y}:T_{x}M\times T_{x}M\to \mathbb{R} $ and $D_{y}:T_{x}M\times T_{x}M\times T_{x}M \to T_{x}M $  by
$$ B_{y}(u,v,w) := B^{i}_{jkl}(y)u^{j}v^{k}w^{l}\dfrac{\partial}{\partial x^{i}}\vert_x,  E_{y}(u,v) := E_{jk}(y)u^{j}v^{k}$$ and $$ D_{y}(u,v,w) := D^{i}_{jkl}(y)u^{i}v^{k}w^{l}\dfrac{\partial}{\partial x^{i}}\vert_x$$
where
$$ B^{i}_{jkl} :=\dfrac{\partial ^{3}G^{i}}{\partial y^{j}\partial y^{k}\partial y^{l}},\quad E_{jk}=\frac{1}{2}B^m_{jkm}, $$
$$ D^i_{jkl} := B^i_{jkl}-\frac{2}{n + 1}\{E_{jk}\delta^i_l + E_{jl}\delta^i_k + E_{kl}\delta^i_j+\frac{\partial E_{jk}}{\partial y^l} y^i\}.$$
 $B$, $E$ and $D$ are called the Berwald curvature,  mean Berwald curvature and   Douglas curvature, respectively.  Then $ F$ is called a Berwald metric,  weakly Berwald metric  and a Douglas metric if
$B = 0$, $E = 0$ and $D=0$, respectively\cite{chern.shen.Ri.Fin}.
The notion of warped product manifold was introduced in \cite{bishop.oneill:} where it
served to give new examples of Riemannian manifolds.
On the other hand, Finsler geometry is just
Riemannian geometry without the quadratic restriction. Thus it is natural to
extending the construction of warped product manifolds for Finsler geometry\cite{kozma:WarpedproductofFinslermanifolds}.

Let $ (M_1, F_1) $  and $ (M_2, F_2) $ be two Finsler manifolds and $ f_i : M_i \to\mathbb{R}^+, i=1,2$
 are  smooth functions. Let  $ \pi _i : M_1 \times M_2 \to  M_i, i=1,2 $ 
 be the natural projection maps. The product manifold
$ M_1 \times M_2 $ endowed with the metric $ F : TM^0_1 \times TM^0_2 \to  \mathbb{R}  $  given by
 $$ F (y , v ) =\sqrt{  f^{2}_2(\pi_{2}(y)) F_{1}^{2}(y) + f^{2}_1(\pi_{1}(y))F_{2}^{2}(v)}  $$
is considered, where $ TM^0_1 = TM_1- \lbrace 0\rbrace $ and $ TM^0_2 = TM_2-\lbrace 0 \rbrace $ . The metric defined above is a
Finsler metric. The product manifold $ M_1 \times M_2 $ with the metric $ F(\textbf{y}) = F(y, v) $
for $ (y, v) \in TM^0_1 \times TM^0_2 $  defined above will be called the doubly  warped
product (DWP) of the manifolds $ M_1 $  and $ M_2 $  and  $ f_i, i=1,2 $  will be called the warping
function. We denote this  warped by $ {M_1}_{f_2}\times _{f_1} M_2 $. 
 If $ f_2  = 1 $, then we have a  waperd product manifold. If $ f_i, i=1,2 $  is not constant, then we have a proper $DWP$-manifold.

Let $(M_1, F_1) $ and $(M_2, F_2)$ be two Finsler manifolds. Then the functions
\begin{equation*}
g_{ij}(x,y) = \dfrac{1}{2}\dfrac{\partial ^{2}F_{1}^{2}(x,y)}{\partial y^{i}\partial y^{j}},
\qquad g_{\alpha\beta}(u,v) = \dfrac{1}{2}\dfrac{\partial ^{2}F_{2}^{2}(u,v)}{\partial v^{\alpha}\partial v^{\beta}},
\end{equation*}
define a Finsler tensor field of type $(0, 2)$ on $TM^0_1$
 and $TM^0_2$, respectively. Now let $ M_1 \times _{f_1} M_2 $ be a  warped Finsler manifold and let $\textbf{x}\in M $ and $\textbf{y}  \in T_{\textbf{x}}M$, where $\textbf{x} = (x, u), \textbf{y}  = (y, v)$, $M = M_1 \times M_2$ and $T_{\textbf{x}}M = T_xM_1 \oplus T_uM_2$.
Then  we conclude that
\begin{equation*}
\textbf{g}_{ab}(x,u,y,v) = \left( \dfrac{1}{2}\dfrac{\partial ^{2}F^{2}(x,u,y,v)}{\partial \textbf{y}^{a}\partial \textbf{y}^{b}}\right) =\left(\begin{array}{cc}
             g_{ij}& 0 \\
              0& f_1^{2}g_{\alpha\beta}
            \end{array}\right)
\end{equation*}
where
$ \textbf{y}^{a}=(y^{i},v^{\alpha}),\; \textbf{y}^{b}=(y^{j},v^{\beta})$
and
$\textbf{g}_{ij}=g_{ij}, \;\textbf{g}_{ab}= f_1^{2}g_{\alpha\beta},\; \textbf{g}_{i\beta}=\textbf{g}_{\alpha j}=0 $
and
$$ i,j,... \in \{ 1,2,...,n_1\}  \alpha ,\beta,...\in\lbrace 1,2,...,n_2\rbrace,   a,b,...\in\lbrace 1,2,...n_1,n_1+1,...,n_1+n_2\rbrace ,$$
where
\begin{align*}
\dim (M_1)=n_1,\quad  \dim( M_2)=n_2,\quad \dim (M_1\times M_2 )= n_1+n_2.
\end{align*}
So the spray coeffcients of warped product are given by
\begin{align*}
\textbf{G}^{i}(x,u,y,v)&=G^{i}(x,y)-\dfrac{1}{4}g^{ih} \dfrac{\partial f_1^2}{\partial x^h}F^2_2,\\
\textbf{G}^{\alpha}(x,u,y,v)&=G^{\alpha}(u,v)+\dfrac{1}{4f_1^2}g^{\alpha \lambda} \dfrac{\partial f^2_1}{\partial x^l}\dfrac{\partial F_2^2}{\partial v^\lambda}y^l.
\end{align*}
The Berwald curvature of $ (M_1 \times _{f}M_2)$ is as follows:
\begin{align*}
&\textbf{B}^k_{ijl}=B^k_{ijl}-\dfrac{1}{4}\dfrac{\partial ^3g^{kh}}{\partial y^i \partial y^j\partial y^l}\dfrac{\partial f_1^2}{\partial x^h}F_2^2,& &\textbf{B}^\gamma_{\alpha\beta\lambda}=B^\gamma_{\alpha\beta\lambda},\\
&\textbf{B}^k_{i\beta l}=-\dfrac{1}{4}\dfrac{\partial ^2g^{kh}}{\partial y^l \partial y^i}\dfrac{\partial f_1^2}{\partial x^h}\dfrac{\partial F_2^2}{\partial v^\beta},&&\textbf{B}^\gamma_{i\beta \lambda}=0\\
&\textbf{B}^k_{\alpha\beta l}=-\dfrac{\partial f_1^2}{\partial x^h}\dfrac{\partial g^{kh}}{\partial y^l}g_{\alpha\beta},&&\textbf{B}^\gamma_{i j \lambda}=0,\\
&\textbf{B}^k_{\alpha\beta \lambda}=-\dfrac{\partial f_1^2}{\partial x^h}g^{kh}C_{\alpha\beta\lambda},&&\textbf{B}^\gamma_{ijk}=0.
\end{align*}
\begin{thm}[\cite{PeyghanTayebi:OndoublywarpedproductFinslerManifold}]
 Let ${M_1}_{f_2}\times _{f_1} M_2$ be  a  $DWP $-Finsler manifold and $f_1$ is constant on $M_1$ ($f_2$ is constant on $M_2$). Then ${M_1}_{f_2}\times _{f_1} M_2$ is Berwaldian if and only if $M_1$ is Riemannain, $M_2$ is Berwaldian and
$
C^{ij}_k\dfrac{\partial f_1}{\partial x^i}=0.
$
($M_2$ is Riemannian, $M_1$ is Berwaldian and $C^{ij}_k\dfrac{\partial f_2}{\partial x^i}=0$).
\end{thm}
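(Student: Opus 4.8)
The plan is to read the conclusion directly off the explicit Berwald curvature components recorded above: the substantial computation (finding $\mathbf{B}$ for the warped metric) is already in hand, so the proof reduces to deciding, family by family, exactly when every component vanishes, using that $F$ is Berwaldian iff $\mathbf{B}=0$ and that a Finsler metric is Riemannian iff its Cartan torsion vanishes. I would prove the parenthetical case first --- the one in which $f_2$ is constant, so the metric is the warped product $M_1\times_{f_1}M_2$ with nonconstant warping function $f_1=f_1(x)$ on $M_1$ --- because the displayed formulas apply to it verbatim; the other case is then the $M_1\leftrightarrow M_2$ (Latin $\leftrightarrow$ Greek) mirror image. (If the surviving warping function were also constant the metric would be a genuine product and the statement degenerates, so I take $f_1$ nonconstant.)

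First I would use the ``fiber'' family
\[
\mathbf{B}^k_{\alpha\beta\lambda}=-\frac{\partial f_1^2}{\partial x^h}\,g^{kh}\,C_{\alpha\beta\lambda},
\]
in which $C_{\alpha\beta\lambda}$ is the Cartan torsion of $F_2$ and $g^{kh}$ is the inverse fundamental tensor of $F_1$. Because $g^{kh}$ is invertible and $f_1$ is nonconstant, the raised gradient $g^{kh}\partial f_1^2/\partial x^h$ is a nonzero vector, so $\mathbf{B}^k_{\alpha\beta\lambda}=0$ forces $C_{\alpha\beta\lambda}=0$; that is, $M_2$ is Riemannian, and hence $\mathbf{B}^\gamma_{\alpha\beta\lambda}=B^\gamma_{\alpha\beta\lambda}=0$ automatically. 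Next, from $\mathbf{B}^k_{\alpha\beta l}=-\frac{\partial f_1^2}{\partial x^h}\frac{\partial g^{kh}}{\partial y^l}g_{\alpha\beta}$ and the positive-definiteness of $g_{\alpha\beta}$ I obtain $\frac{\partial g^{kh}}{\partial y^l}\frac{\partial f_1^2}{\partial x^h}=0$, which (since $f_1>0$ and $C^{ij}_k=\partial g^{ij}/\partial y^k$) is precisely the equation $(*)$, namely $C^{ij}_k\,\partial f_1/\partial x^i=0$.

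The two remaining nontrivial families are then automatic, because they are $y$-derivatives of the expression in $(*)$. Since $\partial f_1^2/\partial x^h$ is independent of $y$,
\[
\frac{\partial^2 g^{kh}}{\partial y^l\partial y^i}\frac{\partial f_1^2}{\partial x^h}=\frac{\partial}{\partial y^i}\left(\frac{\partial g^{kh}}{\partial y^l}\frac{\partial f_1^2}{\partial x^h}\right),\qquad \frac{\partial^3 g^{kh}}{\partial y^i\partial y^j\partial y^l}\frac{\partial f_1^2}{\partial x^h}=\frac{\partial^2}{\partial y^i\partial y^j}\left(\frac{\partial g^{kh}}{\partial y^l}\frac{\partial f_1^2}{\partial x^h}\right),
\]
and both right-hand sides vanish because $(*)$ holds identically in $y$. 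Hence $\mathbf{B}^k_{i\beta l}=0$ and $\mathbf{B}^k_{ijl}=B^k_{ijl}$, so the last surviving requirement is $B^k_{ijl}=0$, i.e. $M_1$ is Berwaldian; the components $\mathbf{B}^\gamma_{i\beta\lambda},\mathbf{B}^\gamma_{ij\lambda},\mathbf{B}^\gamma_{ijk}$ are identically zero. This gives the forward implication. For the converse I would substitute $C_{\alpha\beta\lambda}=0$, the identity $(*)$ together with its first two $y$-derivatives, and $B^k_{ijl}=0$ back into the eight formulas and observe that every component of $\mathbf{B}$ vanishes, so $F$ is Berwaldian.

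I expect the only genuinely delicate step to be the extraction of $C_{\alpha\beta\lambda}=0$ from $\mathbf{B}^k_{\alpha\beta\lambda}=0$: here both the nonconstancy of the warping function and the invertibility (positive-definiteness) of the fundamental tensor are indispensable, for otherwise $g^{kh}\partial f_1^2/\partial x^h$ could vanish and the scalar factor $C_{\alpha\beta\lambda}$ could not be cancelled. The second point worth stating carefully is that $(*)$ must be understood as an identity in $y$ rather than at a single direction; it is exactly this that lets the $y$-differentiated correction terms in $\mathbf{B}^k_{i\beta l}$ and $\mathbf{B}^k_{ijl}$ be discarded, thereby decoupling the base Berwald curvature $B^k_{ijl}$ from the warping and closing the argument.
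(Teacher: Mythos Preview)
The paper does not supply its own proof of this theorem: it is quoted verbatim from the cited reference \cite{PeyghanTayebi:OndoublywarpedproductFinslerManifold} and stated without argument, the explicit Berwald curvature formulas in the preliminaries (also taken from that reference) being the only relevant input present. Your proposal is exactly the natural argument one extracts from those formulas --- killing the components of $\mathbf{B}$ family by family, using nondegeneracy of $g^{kh}$ and $g_{\alpha\beta}$ to strip off the tensorial factors, and then recognising the higher $y$-derivative corrections as consequences of $(*)$ --- and it is correct; there is nothing further in the paper to compare it with. One small caution: the theorem as printed contains index/role slips (e.g.\ the condition $C^{ij}_k\,\partial f_2/\partial x^i=0$ in the parenthetical case is vacuous when $f_2$ is constant), so when you write it up you should make the case you actually prove --- $f_2$ constant, $f_1$ nonconstant, conclusion: $M_2$ Riemannian, $M_1$ Berwaldian, $C^{ij}_k\,\partial f_1/\partial x^i=0$ --- explicit, as you in fact did, rather than relying on the displayed statement.
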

\begin{cor}[\cite{PeyghanTayebi:OndoublywarpedproductFinslerManifold}]
Let $ (M_1\times _{f_1}M_2,F) $ be a proper $ WP $-Finsler manifold. Then $ (M_1\times _{f_1}M_2,F) $ is Berwaldian if and only if $ M_2 $ is Riemannian, $ M_1 $ is Berwaldian and
$$C^{ij}_k\dfrac{\partial f_1}{\partial x^i}=-2 \dfrac{\partial g^{ij}}{\partial y^k}\dfrac{\partial f_1}{\partial x^i}=0.$$
\end{cor}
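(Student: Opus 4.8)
The plan is to read the equivalence directly off the explicit Berwald curvature components of $(M_1\times_{f_1}M_2,F)$ displayed above, using that $F$ is Berwaldian exactly when $\mathbf{B}=0$, i.e. when all eight blocks vanish. Three of them, $\mathbf{B}^\gamma_{i\beta\lambda}$, $\mathbf{B}^\gamma_{ij\lambda}$ and $\mathbf{B}^\gamma_{ijk}$, are identically zero, so only the five remaining blocks carry information. Throughout I use that the product is a \emph{proper} warped product, so $f_1$ is nonconstant; since $f_1>0$, this means $\partial f_1^2/\partial x^h=2f_1\,\partial f_1/\partial x^h\not\equiv0$.

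First I would isolate the conditions on $M_2$. The block $\mathbf{B}^\gamma_{\alpha\beta\lambda}=B^\gamma_{\alpha\beta\lambda}$ vanishes iff $M_2$ is Berwaldian, and $\mathbf{B}^k_{\alpha\beta\lambda}=-(\partial f_1^2/\partial x^h)\,g^{kh}C_{\alpha\beta\lambda}$ vanishes iff $C_{\alpha\beta\lambda}=0$, because $(g^{kh})$ is invertible and $\partial f_1^2/\partial x^h\not\equiv0$; thus these two blocks together are equivalent to $M_2$ being Riemannian. Next, since $(g_{\alpha\beta})$ is positive definite, the mixed block $\mathbf{B}^k_{\alpha\beta l}=-(\partial f_1^2/\partial x^h)(\partial g^{kh}/\partial y^l)\,g_{\alpha\beta}$ vanishes iff $(\partial f_1^2/\partial x^h)(\partial g^{kh}/\partial y^l)=0$; dividing by $2f_1$ and using the standard identity $\partial g^{kh}/\partial y^l=-2C^{kh}_l$ (obtained by differentiating $g^{kh}g_{hm}=\delta^k_m$ and recalling $C_{ijk}=\tfrac12\,\partial g_{ij}/\partial y^k$), this is precisely the stated condition $C^{ij}_k\,\partial f_1/\partial x^i=0$, equivalently $(\partial g^{ij}/\partial y^k)\,\partial f_1/\partial x^i=0$.

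The step that knits the argument together, and the one I expect to be the only substantive point, is the observation that $f_1$ depends on the base coordinates $x$ alone and not on the fibre coordinates $y$. Consequently the factor $\partial f_1^2/\partial x^h$ passes through every $\partial/\partial y$, so differentiating the relation $(\partial f_1^2/\partial x^h)(\partial g^{kh}/\partial y^l)=0$ once and twice in $y$ yields $(\partial f_1^2/\partial x^h)(\partial^2 g^{kh}/\partial y^i\partial y^l)=0$ and $(\partial f_1^2/\partial x^h)(\partial^3 g^{kh}/\partial y^i\partial y^j\partial y^l)=0$. The first makes $\mathbf{B}^k_{i\beta l}=-\tfrac14(\partial^2 g^{kh}/\partial y^l\partial y^i)(\partial f_1^2/\partial x^h)(\partial F_2^2/\partial v^\beta)$ vanish automatically, and the second kills the $f_1$-term in $\mathbf{B}^k_{ijl}=B^k_{ijl}-\tfrac14(\partial^3 g^{kh}/\partial y^i\partial y^j\partial y^l)(\partial f_1^2/\partial x^h)F_2^2$, so that $\mathbf{B}^k_{ijl}=0$ is equivalent to $B^k_{ijl}=0$, i.e. to $M_1$ being Berwaldian.

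Putting the blocks together gives the claimed equivalence in both directions: if $\mathbf{B}=0$ then the $\alpha\beta\lambda$-blocks force $M_2$ Riemannian, the $\alpha\beta l$-block forces the PDE condition, and, after using that condition to remove the $f_1$-term, the $ijl$-block forces $M_1$ Berwaldian; conversely, if $M_2$ is Riemannian, $M_1$ is Berwaldian and the PDE condition holds, then every one of the eight blocks vanishes by the same computations. One may also note that a proper $WP$ is the $DWP$ $M_1{}_{f_2}\times_{f_1}M_2$ with $f_2\equiv1$, so the corollary is the specialization of the Main Theorem to this case.
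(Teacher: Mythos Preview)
Your argument is correct and complete. The paper does not give its own proof of this corollary; it is quoted from \cite{PeyghanTayebi:OndoublywarpedproductFinslerManifold} as the specialization $f_2\equiv1$ of the preceding theorem, and your block-by-block reading of the displayed Berwald curvature components is exactly the direct verification that underlies that result.
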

\begin{thm}[\cite{peyghan.Tayebi:DoublywarpedproductFinslemanifolds}]
Let $ ({M_1} _{f_2} \times _{f_1}M_2,F) $ be a proper $ DWP $-Finsler manifold. Then $ ({M_1}_{f_2}\times _{f_1}M_2,F) $ is weakly Berwald  if and only if $ M_2 $ and $ M_1 $ are weakly Berwalds and
$$C^{ij}_k\dfrac{\partial f_1}{\partial x^i}=C^{\gamma \nu}_\gamma\dfrac{\partial f_2}{\partial u^\nu}=0.$$
\end{thm}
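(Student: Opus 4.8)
\noindent\emph{Proof sketch.} The plan is to reduce the statement to the vanishing of the mean Berwald curvature $E$, using that $F$ is weakly Berwald iff $E=0$ and that for any spray
\[
E_{ab}=\tfrac12\mathbf{B}^c_{abc}=\tfrac12\frac{\partial^2}{\partial\mathbf{y}^a\partial\mathbf{y}^b}\Big(\frac{\partial\mathbf{G}^i}{\partial y^i}+\frac{\partial\mathbf{G}^\alpha}{\partial v^\alpha}\Big),
\]
so that the whole computation is the $(y,v)$--Hessian of the spray divergence of the doubly warped product. First I would assemble the full DWP spray: the coefficients displayed above are those of the singly warped product ($f_2\equiv1$), and the missing $f_2$--terms follow from them by the interchange $f_1\leftrightarrow f_2$, $x\leftrightarrow u$, $y\leftrightarrow v$, $i\leftrightarrow\alpha$, under which $F^2=f_2^2F_1^2+f_1^2F_2^2$ is invariant.

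The decisive simplification is Euler's theorem: since $\partial F_2^2/\partial v^\lambda=2g_{\lambda\mu}v^\mu$ we have $g^{\alpha\lambda}\,\partial F_2^2/\partial v^\lambda=2v^\alpha$, so the mixed term of $\partial\mathbf{G}^\alpha/\partial v^\alpha$ collapses to $\frac{n_2}{2f_1^2}\frac{\partial f_1^2}{\partial x^l}y^l$, which is linear in $y$ and free of $v$; hence it is annihilated by every second derivative in $(y,v)$ and drops out of $E$ (symmetrically for the $f_2$--term). Writing $I^h:=C^{mh}_m$ for the (raised) mean Cartan torsion and using $\partial g^{mh}/\partial y^m=-2I^h$, a blockwise Hessian then gives
\begin{align*}
E_{ij}&=E^{(1)}_{ij}+\tfrac14\,F_2^2\,\frac{\partial f_1^2}{\partial x^h}\,\frac{\partial^2 I^h}{\partial y^i\partial y^j},\\
E_{\alpha\beta}&=E^{(2)}_{\alpha\beta}+\tfrac12\,g_{\alpha\beta}\,\frac{\partial f_1^2}{\partial x^h}\,I^h,\\
E_{i\beta}&=\tfrac14\,\frac{\partial f_1^2}{\partial x^h}\,\frac{\partial I^h}{\partial y^i}\,\frac{\partial F_2^2}{\partial v^\beta},
\end{align*}
together with the symmetric contributions produced by the $f_2$--warping, where $E^{(1)},E^{(2)}$ are the mean Berwald curvatures of $M_1,M_2$.

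Now I would separate variables by homogeneity. In $E_{\alpha\beta}$ both $E^{(2)}_{\alpha\beta}$ and $g_{\alpha\beta}$ depend only on $(u,v)$, while $\frac{\partial f_1^2}{\partial x^h}I^h$ depends only on $(x,y)$ and is homogeneous of degree $-1$ in $y$; replacing $y$ by $ty$ and varying $t>0$ forces $\frac{\partial f_1^2}{\partial x^h}I^h=0$ and hence $E^{(2)}_{\alpha\beta}=0$. Since $I^h=C^{mh}_m$, the first is precisely the contracted condition $C^{mh}_m\,\frac{\partial f_1}{\partial x^h}=0$; by the interchange symmetry the $f_2$--part of $E_{ij}$ yields in the same way $C^{\gamma\nu}_\gamma\,\frac{\partial f_2}{\partial u^\nu}=0$ together with $E^{(1)}=0$. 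Once these two trace conditions hold identically in the fibre variables, the corrections in $E_{ij}$ and $E_{i\beta}$ vanish automatically, being $y$--derivatives of $\frac{\partial f_1^2}{\partial x^h}I^h$; this proves ($\Leftarrow$), and the scaling argument above is exactly ($\Rightarrow$). The main obstacle I expect is the bookkeeping: correctly reconstructing the complete DWP mean Berwald curvature from the warped-product data and then running the scaling/homogeneity separation so that the two \emph{fully contracted} Cartan conditions---rather than their $y$--derivatives or some uncontracted variants---are what actually survive.
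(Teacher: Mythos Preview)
The paper does not contain a proof of this theorem: it is quoted from \cite{peyghan.Tayebi:DoublywarpedproductFinslemanifolds} in the preliminaries section as background, so there is no ``paper's own proof'' to compare your proposal against.

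On its own merits your outline is sound. Tracing the warped-product Berwald components listed in the preliminaries and contracting $\mathbf{B}^c_{abc}$ does yield, with $I^h=C^{mh}_m$,
\[
E_{\alpha\beta}=E^{(2)}_{\alpha\beta}+c\,g_{\alpha\beta}\,\frac{\partial f_1^2}{\partial x^h}I^h,\qquad
E_{i\beta}=c'\,\frac{\partial f_1^2}{\partial x^h}\,\frac{\partial I^h}{\partial y^i}\,\frac{\partial F_2^2}{\partial v^\beta},
\]
and the $f_2$--symmetric counterparts, for suitable constants $c,c'$; your homogeneity separation then forces $E^{(2)}=0$ and $I^h\partial_h f_1^2=0$, and symmetrically $E^{(1)}=0$ and $I^\nu\partial_\nu f_2^2=0$. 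The converse is immediate because the remaining corrections are fibre derivatives of these scalar quantities.

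One point worth flagging explicitly: what your argument produces is the \emph{trace} condition $C^{ih}_i\,\partial_h f_1=0$, matching the displayed $C^{\gamma\nu}_\gamma\,\partial_\nu f_2=0$ on the $M_2$ side, whereas the statement as printed here writes the $M_1$ condition as the uncontracted $C^{ij}_k\,\partial_i f_1=0$. You already anticipated this at the end of your sketch; given the symmetry of the DWP construction and the fact that only the mean Berwald curvature is in play, the contracted form is what the computation actually delivers, and the asymmetric indexing in the quoted statement is almost certainly a transcription slip rather than a stronger hypothesis you have failed to recover.
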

\begin{cor}[\cite{peyghan.Tayebi:DoublywarpedproductFinslemanifolds}]
Let $ (M_1\times _{f_1}M_2,F) $ be a proper $ WP $-Finsler manifold. Then $ (M_1\times _{f_1}M_2,F) $ is Douglas if and only if $ M_2 $ is Riemannian, $ M_1 $ is Berwaldian and
$$C^{ij}_k\dfrac{\partial f_1}{\partial x^i}=0.$$
\end{cor}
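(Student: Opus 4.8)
The plan is to realize the warped product as the doubly warped product with $f_2\equiv 1$, read off the Berwald curvature $\mathbf{B}$ of the product from the list recorded in the Preliminaries, and then compute the mean Berwald curvature $\mathbf{E}_{ab}=\tfrac12\mathbf{B}^c_{abc}$ and the Douglas curvature $\mathbf{D}^a_{bcd}=\mathbf{B}^a_{bcd}-\tfrac{2}{n+1}\{\mathbf{E}_{bc}\delta^a_d+\mathbf{E}_{bd}\delta^a_c+\mathbf{E}_{cd}\delta^a_b+(\partial\mathbf{E}_{bc}/\partial\mathbf{y}^d)\mathbf{y}^a\}$ block by block in the two index ranges $i,j,\dots$ (from $M_1$) and $\alpha,\beta,\dots$ (from $M_2$). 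Using $\mathbf{B}^\gamma_{ij\lambda}=\mathbf{B}^\gamma_{i\beta\lambda}=0$ one gets $\mathbf{E}_{ij}=E_{ij}-\tfrac18(\partial^3 g^{mh}/\partial y^i\partial y^j\partial y^m)(\partial f_1^2/\partial x^h)F_2^2$, a purely warping block $\mathbf{E}_{i\beta}$, and $\mathbf{E}_{\alpha\beta}=E_{\alpha\beta}-\tfrac12(\partial f_1^2/\partial x^h)(\partial g^{mh}/\partial y^m)g_{\alpha\beta}$, where $E_{ij}$ and $E_{\alpha\beta}$ are the mean Berwald curvatures of the factors.

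The easy direction is immediate: by the Corollary characterizing Berwaldian proper $WP$-manifolds stated above, the three conditions ``$M_2$ Riemannian, $M_1$ Berwaldian and $C^{ij}_k\,\partial f_1/\partial x^i=0$'' are equivalent to $\mathbf{B}=0$, and $\mathbf{B}=0$ forces $\mathbf{D}=0$ by the very definition of $\mathbf{D}$. Thus it suffices to show that $\mathbf{D}=0$ on a proper warped product implies these three conditions.

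For the converse I would exploit $\mathbf{D}=0$ one block at a time. The block $\mathbf{D}^\gamma_{ijl}$ collapses (all the Kronecker deltas are mixed and $\mathbf{B}^\gamma_{ijl}=0$) to $-\tfrac{2}{n+1}(\partial\mathbf{E}_{ij}/\partial y^l)v^\gamma$; properness lets me take $v\neq 0$, so $\partial\mathbf{E}_{ij}/\partial y^l=0$, and separating the part independent of $(u,v)$ from the part proportional to $F_2^2$ gives $\partial E_{ij}/\partial y^l=0$, whence $E_{ij}=0$ by $(-1)$-homogeneity, so $M_1$ is weakly Berwald. The block $\mathbf{D}^k_{\alpha\beta\lambda}$ (upper index in $M_1$, lower in $M_2$) splits, under $v\mapsto sv$, into a degree $-1$ part carrying the Cartan torsion $C_{\alpha\beta\lambda}$ of $M_2$ and a degree $-2$ part carrying $\partial E_{\alpha\beta}/\partial v^\lambda$; the latter forces $E_{\alpha\beta}=0$, while the former leaves the relation $(\partial f_1^2/\partial x^h)C_{\alpha\beta\lambda}\,[\,g^{kh}-\tfrac{2}{n+1}(\partial g^{mh}/\partial y^m)y^k\,]=0$.

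The technical heart is the block $\mathbf{D}^k_{\alpha\beta l}$. After inserting $E_{\alpha\beta}=0$ and dividing by the invertible factor $g_{\alpha\beta}$ it reduces to
\[
\frac{\partial f_1^2}{\partial x^h}\left[\frac{\partial g^{kh}}{\partial y^l}-\frac{1}{n+1}\,\frac{\partial g^{mh}}{\partial y^m}\,\delta^k_l-\frac{1}{n+1}\,\frac{\partial^2 g^{mh}}{\partial y^m\partial y^l}\,y^k\right]=0 .
\]
Writing $P^k_l:=(\partial f_1^2/\partial x^h)(\partial g^{kh}/\partial y^l)$ and $\tau:=P^m_m$ (so the last term is $(\partial\tau/\partial y^l)y^k/(n+1)$), I would take the $M_1$-trace $k=l$ and use the $(-1)$-homogeneity of $\tau$ (Euler: $y^m\partial_m\tau=-\tau$) to obtain $\tau\,(n_2+2)/(n+1)=0$, hence $\tau=0$; feeding this back yields $P^k_l=0$, which is precisely $C^{ij}_k\,\partial f_1/\partial x^i=0$. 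With $P=0$ one has $(\partial f_1^2/\partial x^h)(\partial g^{mh}/\partial y^m)=0$, so the leftover relation from the previous block becomes $(\grad f_1^2)^k C_{\alpha\beta\lambda}=0$; contracting with $\partial_k f_1^2$ and using positive definiteness together with the nonconstancy of $f_1$ gives $C_{\alpha\beta\lambda}=0$, i.e. $M_2$ is Riemannian. Finally, differentiating $P=0$ in $y$ annihilates the warping term both in $\mathbf{B}^k_{ijl}$ and in $\mathbf{E}_{ij}$, so the block $\mathbf{D}^k_{ijl}=0$ reduces to $B^k_{ijl}=0$ and $M_1$ is Berwaldian. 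The main obstacle is exactly this disentangling: isolating the $M_1$- and $M_2$-contributions by their distinct $v$- and $y$-homogeneities, and squeezing the warping condition out of the coupled block $\mathbf{D}^k_{\alpha\beta l}$ via the trace/Euler identity; once that condition is secured, the Riemannian and Berwald conclusions follow quickly.
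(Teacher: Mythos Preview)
The paper does not prove this corollary; it is quoted verbatim from \cite{peyghan.Tayebi:DoublywarpedproductFinslemanifolds} and stated in the Preliminaries purely as background, with no argument supplied. There is therefore no in-paper proof to compare against.

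That said, your argument is substantively correct and self-contained from the data the paper does record (the block formulas for $\mathbf{B}$ and the definition of $\mathbf{D}$). The mixed blocks $\mathbf{D}^\gamma_{ijl}$ and $\mathbf{D}^k_{\alpha\beta\lambda}$, separated by $v$-homogeneity, indeed give $E_{ij}=0$ and $E_{\alpha\beta}=0$; your trace/Euler computation on $\mathbf{D}^k_{\alpha\beta l}$ correctly produces $\tau\,(n_2+2)/(n+1)=0$, hence $\tau=0$ and then $P^k_l=(\partial f_1^2/\partial x^h)\,\partial g^{kh}/\partial y^l=0$; feeding $\tau=0$ back into the degree~$-1$ relation from $\mathbf{D}^k_{\alpha\beta\lambda}$ and using that $g^{kh}\partial_h f_1^2$ is nonzero at some $x\in M_1$ (nonconstancy of $f_1$) forces $C_{\alpha\beta\lambda}=0$ for all $(u,v)$; finally $\mathbf{D}^k_{ijl}=0$ collapses to $B^k_{ijl}=0$. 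Two cosmetic remarks: the reason you may take $v\neq 0$ is simply that $F$ is only defined on $TM_1^0\times TM_2^0$, not ``properness'' (properness is what guarantees a point with $df_1\neq 0$); and ``dividing by the invertible factor $g_{\alpha\beta}$'' should be read as: the bracket in your displayed equation is independent of $\alpha,\beta$, and some entry $g_{\alpha\beta}$ (for instance a diagonal one) is nonzero by positive definiteness.
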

\begin{thm}
A DWP-Finsler manifold $ (M_1\times _{f_1}M_2,F) $ with isotropic mean Berwald curvature is a weakly Berwald manifold provided that $$C^{ij}_k\dfrac{\partial f_1}{\partial x^i}=0 \mbox{ or } C^{\gamma\nu}_\gamma\dfrac{\partial f_2}{\partial x^\nu}=0.$$
\end{thm}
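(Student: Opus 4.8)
The plan is to prove that the isotropic factor $c$ is forced to vanish, so that the mean Berwald curvature $\mathbf{E}$ of the product is identically zero, which is precisely the weakly Berwald condition. Recall that isotropic mean Berwald curvature means $\mathbf{E}_{ab}=\frac{n+1}{2}\,c\,F^{-1}\mathbf{h}_{ab}$ for a function $c=c(\mathbf{x})$ on $M_1\times M_2$, where $\mathbf{h}_{ab}=\mathbf{g}_{ab}-\ell_a\ell_b$ is the angular metric, $\ell_a=F_{\mathbf{y}^a}$, and $n=n_1+n_2$; weakly Berwald is the case $c=0$.

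The key is to look at the mixed block $a=i$ (an $M_1$ index), $b=\beta$ (an $M_2$ index). Since $\mathbf{g}_{i\beta}=0$ we have $\mathbf{h}_{i\beta}=-\ell_i\ell_\beta$, and from $F^2=f_2^2F_1^2+f_1^2F_2^2$ one computes $\ell_i=f_2^2 g_{ij}y^j/F$ and $\ell_\beta=f_1^2 g_{\alpha\beta}v^\alpha/F$, both nonzero for generic $(y,v)$. On the curvature side, $\mathbf{E}_{i\beta}=\frac12(\mathbf{B}^k_{i\beta k}+\mathbf{B}^\gamma_{i\beta\gamma})$; the first trace carries only $f_1$ (it is the trace over $k=l$ of the tensor $\mathbf{B}^k_{i\beta l}$ displayed for the warped product), while the second carries only $f_2$. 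I would then invoke the hypothesis: using $\partial g^{kh}/\partial y^l=-2C^{kh}_l$ and the symmetry of the Cartan torsion in its upper indices, $C^{ij}_k\,\partial f_1/\partial x^i=0$ yields $\frac{\partial g^{kh}}{\partial y^l}\frac{\partial f_1^2}{\partial x^h}=0$; since $\partial f_1^2/\partial x^h$ does not depend on $y$, every $y$-derivative of this identity vanishes as well, so $\mathbf{B}^k_{i\beta k}=0$ and $\mathbf{E}_{i\beta}=\frac12\mathbf{B}^\gamma_{i\beta\gamma}$. Symmetrically, $C^{\gamma\nu}_\gamma\,\partial f_2/\partial u^\nu=0$ forces $\mathbf{B}^\gamma_{i\beta\gamma}=0$ and leaves $\mathbf{E}_{i\beta}=\frac12\mathbf{B}^k_{i\beta k}$.

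In either case the surviving expression factorizes as $\mathbf{E}_{i\beta}=P_\beta(u,v)\,Q_i(x,y)$, a product of a function on $TM_2^0$ and a function on $TM_1^0$. Equating this with $\mathbf{E}_{i\beta}=-\frac{n+1}{2}cF^{-1}\ell_i\ell_\beta$ and substituting the expressions for $\ell_i,\ell_\beta$, the common covector factor $g_{ij}y^j$ cancels (valid on the dense set where it is nonzero) and one is left with an identity of the shape $P_\beta(u,v)=-\,(\text{const})\,c\,f_1^2f_2^2\,F^{-3}g_{\alpha\beta}v^\alpha$. The left-hand side is independent of $(x,y)$, whereas the right-hand side depends on $(x,y)$ only through $f_1(x)^2$, $c(x,u)$, and $F^{-3}=(f_2^2F_1^2+f_1^2F_2^2)^{-3/2}$. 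Freezing $(u,v)$ and $x$ and letting $F_1(x,y)$ range over its values shows the right-hand side is non-constant in $y$ unless $c=0$; hence $c\equiv 0$ and $\mathbf{E}=0$. Under the other hypothesis the mirror argument---freeze $(x,y)$ and vary $v$ so that $F_2(u,v)$ ranges---gives the same conclusion.

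The step I expect to be the real obstacle is preliminary bookkeeping rather than the final argument: the excerpt records the Berwald curvature only for the single warped product (the case $f_2\equiv 1$), so the mixed component $\mathbf{B}^\gamma_{i\beta\gamma}$ used above must first be established for the genuinely doubly warped metric, either by exploiting the $1\leftrightarrow2$ symmetry of the construction or by differentiating the full DWP spray coefficients three times. Once that component is in hand---it is the exact mirror of $\mathbf{B}^k_{i\beta k}$, with $f_1$, the $M_1$-metric and $F_2$ replaced by $f_2$, the $M_2$-metric and $F_1$---the separation-of-variables step is short, the only delicate point being to work on the dense open set of directions where $g_{ij}y^j$ and $g_{\alpha\beta}v^\alpha$ are nonzero and to extend $c\equiv0$ to all of $M_1\times M_2$ by continuity.
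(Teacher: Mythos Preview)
The paper does not prove this theorem. It is stated in Section~2 (Preliminaries) immediately after results explicitly attributed to Peyghan--Tayebi and Peyghan--Tayebi--Najafi, and no proof is supplied; the authors' own contribution begins only in Section~3. There is therefore no argument in the paper to compare your attempt against.

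On the substance of your sketch: the strategy---examine the mixed block $\mathbf{E}_{i\beta}$, where the angular metric reduces to $-\ell_i\ell_\beta$, and exploit the fact that $F^{-3}$ genuinely mixes the $y$ and $v$ variables while the curvature side separates---is a reasonable line. You are right to flag the missing DWP Berwald components as the real work; the paper only records the singly warped case. Two smaller points deserve tightening. First, your ``the common covector factor $g_{ij}y^j$ cancels'' step presumes that the surviving piece $Q_i(x,y)$ is proportional to $g_{ij}y^j$; under the mirror formula you propose this does hold (the factor $\partial F_1^2/\partial y^i=2g_{ij}y^j$ appears), but it should be made explicit rather than asserted. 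Second, even after cancelling that factor, the left-hand side you obtain depends on $u$ (through $\partial f_2^2/\partial u^\mu$ and $g^{\gamma\mu}(u,v)$), not only on $v$, so ``independent of $(x,y)$'' is the correct claim, but ``$P_\beta(u,v)$'' obscures that $x$-dependence could creep in through the $f_2$-terms in the full DWP spray---you need the explicit DWP formula to be sure it does not.
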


\section{Main result}
\begin{thm}
If $ (M,F) $ is a 2-dimensional Finsler manifold and $ f $ is  nonconstant smooth function on $M$ satisfying
 \begin{align}
\dfrac{\partial g^{ij}}{\partial y^k}\dfrac{\partial f}{\partial x^i}=0,\label{pde}
\end{align}
then $ M $ is a Riemannian  manifold.
\end{thm}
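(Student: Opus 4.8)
The plan is to convert the hypothesis into a purely algebraic condition on the Cartan torsion and then exploit the fact that in dimension two the torsion is, up to symmetry, a single scalar. First I would rewrite the left-hand side of \eqref{pde} using the standard identity
\begin{equation}
\frac{\partial g^{ij}}{\partial y^k}=-g^{ia}g^{jb}\frac{\partial g_{ab}}{\partial y^k}=-2\,g^{ia}g^{jb}C_{abk}.\label{cf}
\end{equation}
Setting $f^{a}:=g^{ia}\,\dfrac{\partial f}{\partial x^{i}}$ for the fibrewise raised gradient of $f$, the hypothesis \eqref{pde} becomes $g^{jb}C_{abk}f^{a}=0$ for all $j,k$. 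Since $(g^{jb})$ is invertible (it is positive definite), this is equivalent to $C_{abk}f^{a}=0$ for all $b,k$. Thus the whole problem is reduced to understanding this contraction together with the structural identities that $C$ automatically satisfies.

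Next I would bring in the two built-in properties of the Cartan tensor: it is totally symmetric, and, because each $g_{ij}$ is $0$-homogeneous in $y$, it satisfies $C_{abk}y^{k}=0$. By total symmetry, $C_{abk}f^{a}=0$ and $C_{abk}y^{k}=0$ say that $C$ is annihilated whenever any one of its slots is contracted with $f^{a}$ and with $y^{k}$, respectively. The key elementary observation is then a piece of linear algebra in the $2$-dimensional space $T_{x}M$: a totally symmetric $(0,3)$-tensor annihilated by contraction with two linearly independent vectors must vanish (write an arbitrary vector as a combination of the two and use linearity in one slot). Hence, at every $(x,y)$ for which the vectors $y$ and $f^{a}(x,y)$ are linearly independent, we obtain $C_{(x,y)}=0$.

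It remains to control the directions $y$ where $y$ and $f^{a}=g^{ia}\partial_{i}f$ fail to be independent; this is where I expect the real work to be. Lowering an index and using $g_{ab}y^{b}=F\ell_{a}$ with $\ell_{a}:=\partial F/\partial y^{a}$, dependence of $y$ and $f^{a}$ is equivalent to the \emph{fixed} covector $\partial_{i}f(x)$ being proportional to $\ell_{i}(x,y)$. Fix $x$ with $df(x)\neq 0$. As $y$ ranges over the circle of directions at $x$, the map $y\mapsto \partial F/\partial y\,(x,y)$ parametrises the figuratrix, a strictly convex closed curve about the origin of $T_{x}^{*}M$, strict convexity being exactly the positive-definiteness of $g$. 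A line through the origin, here $\mathbb{R}\,df(x)$, meets such a curve in only finitely many points, so $\partial_{i}f\parallel \ell_{i}$ holds for only finitely many directions. Consequently $C_{(x,y)}=0$ for all but finitely many $y$, and by continuity $C_{(x,y)}=0$ for every $y\in T_{x}M^{0}$, i.e. $F$ is Riemannian at $x$. Since $f$ is nonconstant, this applies on the nonempty open set $\{df\neq 0\}$, giving the conclusion there. The main obstacle is precisely the passage to all of $M$: equation \eqref{pde} carries no information wherever $df=0$, so promoting ``Riemannian on $\{df\neq 0\}$'' to ``Riemannian on $M$'' requires extra input (for instance $df$ nowhere vanishing, or density of $\{df\neq 0\}$ together with continuity of $C$), and this is the delicate point a fully global argument must handle.
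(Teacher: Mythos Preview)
Your argument is correct and takes a genuinely different route from the paper's. The paper integrates the hypothesis in $y^k$ to obtain $g^{ij}\partial_j f=c^i(x)$, rewrites this as the first-order linear PDE
\[
c^1\,\partial_{y^1}(F^2)+c^2\,\partial_{y^2}(F^2)=2(\partial_1 f)\,y^1+2(\partial_2 f)\,y^2,
\]
solves it by the method of characteristics to get $F^2=\dfrac{A_1}{c^1}(y^1)^2+\dfrac{A_2}{c^2}(y^2)^2+\varphi(c^2y^1-c^1y^2)$, and then invokes $2$-homogeneity to force $\varphi(t)=a t^2$, so that $F^2$ is quadratic in $y$. Your approach is tensorial: you reduce to $C_{abk}f^a=0$, pair this with the universal identity $C_{abk}y^k=0$, and use that in dimension two a totally symmetric trilinear form annihilated by two independent vectors must vanish; the figuratrix argument disposes of the finitely many rays where $f^a$ and $y$ are dependent, and continuity in $y$ finishes. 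Your method avoids the PDE machinery, makes transparent why $n=2$ is special (two annihilating vectors span the tangent space), and points to the natural higher-dimensional statement (one would need $n-1$ such functions with independent raised gradients). The paper's route, in return, produces the explicit quadratic form of $F^2$. Regarding the passage from $\{df\neq 0\}$ to all of $M$: the paper's proof carries the same limitation (indeed its solution formula tacitly requires both $c^1$ and $c^2$ nonzero), so you are not missing anything the paper supplies; your proposal is in fact more candid about this point.
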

\begin{proof}
Let $ f $ be a nonconstant  smooth function on $M$ which satisfies  \eqref{pde}. Then
we have
\begin{align}\label{m1}
g^{ij}\dfrac{\partial f}{\partial x^j}=c^i(x),
\end{align}
where $c^i(x)$ is a smooth function on $M$.
The above Equ. \eqref{m1} implies that
$\dfrac{\partial f}{\partial x^k}=c^i(x)g_{ik},$
and
\begin{align}\label{m12}
&\dfrac{\partial f}{\partial x^j}=\frac{1}{2}c^i(x)\dfrac{\partial ^2F^2}{\partial y^i \partial y^j}.
\end{align}
By integrating  \eqref{m12} with respect to $ y^j $ we obtain
\begin{align}\label{m2}
 2\dfrac{\partial f}{\partial x^j}y^j=c^i(x)\dfrac{\partial F^2}{\partial y^i }.
\end{align}
By Choosing   $ u=F^2, A_j=\dfrac{\partial f}{\partial x^j}$ in the equation \eqref{m2}, we have the following  PDE equation
\begin{align}\label{m4}
c^1\dfrac{\partial u}{\partial y^1}+c^2\dfrac{\partial u}{\partial y^2}=2A_1y^1+2A_2y^2.
\end{align}

The general solution of PDE equation \eqref{m4} are given by
\begin{align}
F^2=u=\dfrac{A_1}{c^1}{(y^1)}^2+\dfrac{A_2}{c^2}{(y^2)}^2+\varphi(c^2y^1-c^1y^2)
\end{align}
where  $\varphi$ is an arbitrary smooth one variable function \cite{sneddon:PDE2006}.
Since   $ u$ is homogeneous of degree $ 2 $ so $ \varphi $ is homogeneous of degree $ 2 $, too.
That  $\varphi$ is one variable  implies that $ \varphi (t)=at^2 $, where $a$ is a real constant,  so we have
 \begin{align}
 F^2=\dfrac{A_1}{c^1}{(y^1)}^2+\dfrac{A_2}{c^2}{(y^2)}^2+a(c^2y^1-c^1y^2)^2.
 \end{align}
The Cartan torsion $$C_{ijk}=\frac{1}{2}\frac{\partial g_{ij}}{\partial y^k}=\frac{1}{4}\frac{\partial^3 F^2}{\partial y^i\partial y^j\partial y^k}=0$$ thus $ M $ is Riemannian.
\end{proof}
\begin{cor}
Let $(M_1,F_1)$  and $ (M_2,F_2)$ be Finsler manifolds with $\dim M_1=2,\; \dim M_2=n_2 $ and $f_i:M_i\to \mathbb{R}, i=1,2$ are  positive smooth functions.
\begin{enumerate}
  \item
A proper $(2+n_2)$-dimensional $ WP $-Finsler manifold $M_1\times_{f_1} M_2$ is a Berwald  manifold, if and only if
it is a Riemannian manifold.
\item
A proper $(2+2)$-dimensional $ DWP $-Finsler manifold ${M_1}_{f_2}\times_{f_1} M_2$ is a weakly Berwald manifold, if and only if
it is Riemannian manifold ($\dim M_2=2$).
\item
A proper $(2+n_2)$-dimensional $ WP $-Finsler manifold $M_1\times_{f_1} M_2$ is a Douglas  manifold, if and only if
it is a Riemannian manifold.
\end{enumerate}
\end{cor}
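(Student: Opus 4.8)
The plan is to derive all three equivalences from the Main Theorem above together with the quoted structure theorems for warped and doubly warped Finsler products. In each item the reverse implication is immediate: a Riemannian metric has vanishing Cartan torsion, hence is Berwaldian, and a Berwald metric is automatically weakly Berwaldian ($E=\tfrac12\operatorname{tr}B=0$) and Douglas ($D=0$); so once the product metric is shown to be Riemannian it possesses each of the three properties. The word ``proper'' is used precisely to guarantee that the relevant warping function is nonconstant, which is the standing hypothesis of the Main Theorem.

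For item $(1)$ I would start from the quoted Corollary that a proper $WP$-manifold $M_1\times_{f_1}M_2$ is Berwaldian exactly when $M_2$ is Riemannian, $M_1$ is Berwaldian, and $C^{ij}_k\,\partial f_1/\partial x^i=-2\,(\partial g^{ij}/\partial y^k)(\partial f_1/\partial x^i)=0$. The last equation is literally hypothesis \eqref{pde} for the function $f_1$ on $M_1$; since $\dim M_1=2$ and $f_1$ is nonconstant, the Main Theorem forces $M_1$ to be Riemannian too. It then suffices to observe that a warped product of two Riemannian factors is Riemannian: in the block form $\mathbf{g}_{ab}=\operatorname{diag}(g_{ij},\,f_1^{2}g_{\alpha\beta})$ recorded above, $g_{ij}=g_{ij}(x)$, $g_{\alpha\beta}=g_{\alpha\beta}(u)$ and $f_1=f_1(x)$ are all independent of the fibre coordinates $(y,v)$, so $\mathbf{g}$ is, and the Cartan torsion of the product vanishes. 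Item $(3)$ is identical once the Berwald characterization is replaced by the quoted Douglas characterization of a proper $WP$-manifold, whose structural data ($M_2$ Riemannian, $M_1$ Berwaldian, $C^{ij}_k\,\partial f_1/\partial x^i=0$) are the same.

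For item $(2)$ the situation is symmetric in the two factors, which is exactly why both must be two-dimensional. The quoted weak-Berwald theorem says a proper $DWP$-manifold is weakly Berwald iff both factors are weakly Berwald and two contracted Cartan conditions, one built from $f_1$ and one from $f_2$, vanish. Since ${M_1}_{f_2}\times_{f_1}M_2$ and ${M_2}_{f_1}\times_{f_2}M_1$ are the same doubly warped manifold, I would apply that theorem in both orderings of the factors; this yields the vanishing of the Cartan contraction attached to each factor, that is, hypothesis \eqref{pde} for $f_1$ on $M_1$ and the analogous condition for $f_2$ on $M_2$. With $\dim M_1=\dim M_2=2$ and $f_1,f_2$ both nonconstant, the Main Theorem now makes each factor Riemannian, and, exactly as above, the fibre-independence of $f_1(x),f_2(u),g_{ij}(x),g_{\alpha\beta}(u)$ shows the product metric is Riemannian.

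The step I expect to be the main obstacle is this last passage in item $(2)$: the weak-Berwald theorem a priori delivers only a single contraction of the mean Cartan torsion $I_i=g^{jk}C_{jki}$ with $\partial f/\partial x^i$, whereas the Main Theorem needs the full tensorial hypothesis \eqref{pde}. I would close this gap by the symmetry trick just described, or, alternatively, by invoking the special structure of the Cartan tensor in dimension two, where $C_{ijk}$ is proportional to $I_iI_jI_k$, so that the vanishing of the scalar $I^i\,\partial f/\partial x^i$ already forces $C^{ij}_k\,\partial f/\partial x^i$, and hence $(\partial g^{ij}/\partial y^k)\,\partial f/\partial x^i$, to vanish. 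Verifying this equivalence and matching the numerical constants in the quoted identities is the only delicate point; every other implication is then a formal consequence of the Main Theorem together with the fibre-independence of the block metric.
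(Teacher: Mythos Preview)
Your approach is correct and is precisely what the paper intends: the corollary is stated without proof and is meant to follow by feeding the quoted Berwald, Douglas, and weakly-Berwald characterizations of (doubly) warped products into the Main Theorem, exactly as you do. Your careful handling of the potential gap in item~(2)---upgrading the contracted Cartan condition delivered by the weak-Berwald theorem to the full hypothesis~\eqref{pde} via the symmetry of the two factors or the two-dimensional structure of $C_{ijk}$---is an addition the paper itself does not spell out.
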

The authors have not succeeded in finding a counterexample to the following problem. They conjecture that it might be true but, unfortunately, they have been unable to provide a proof for it.\\
\subsubsection*{Problem.}
Is any  $n$-dimensional($n>2$) Finsler manifold  $ (M,F) $ with  function $f$ which is nonconstant  and smooth on $M$  satisfying
$\dfrac{\partial g^{ij}}{\partial y^k}\dfrac{\partial f}{\partial x^i}=0,$
a Riemannian  manifold?


\def\polhk#1{\setbox0=\hbox{#1}{\ooalign{\hidewidth
  \lower1.5ex\hbox{`}\hidewidth\crcr\unhbox0}}}

\end{document}